\newcommand{\Hom}{\operatorname{Hom}}
\newcommand{\Spec}{\operatorname{Spec}}
\newcommand{\Proj}{\operatorname{Proj}}
\newcommand{\enh}{\operatorname{enh}}
\newcommand{\Lie}{\operatorname{Lie}}
\newcommand{\End}{\operatorname{End}}
\newcommand{\id}{\operatorname{id}}
\newtheorem{theorem}{Theorem}[section]
\newtheorem{lemma}[theorem]{Lemma}
\newtheorem{proposition}[theorem]{Proposition}
\title{Group compactifications and moduli spaces}
\author{Johan Martens}
\thanks{The author was partially supported by EPSRC grant EP/N029828/1.}
\address[Johan Martens]{School of Mathematics and Maxwell Institute, The University of Edinburgh, Peter Guthrie Tait Road, Edinburgh EH9 3FD, United Kingdom}
\email{johan.martens@ed.ac.uk}
\date{\today}                                           
\begin{document}
\begin{abstract}
We give a summary of joint work with Michael Thaddeus that realizes toroidal compactifcations of split reductive groups as moduli spaces of framed bundles on chains of rational curves.  We include an extension of this work that covers Artin stacks with good moduli spaces.  We discuss, for complex groups, the symplectic counterpart of these compactifications, and conclude with some open problems about the moduli problem concerned.
\end{abstract}
\maketitle
\section{Introduction}
\subsection{}The basic problem we want to discuss in this note is the following: given a reductive group $G$ (which, as an affine variety, is not complete), how can you think about completions of $G$?  For good measure, we want to restrict ourselves to completions that are well-behaved, which as a minimum will require $G\times G$-equivariance.  For $G=PGL(n)$ this is a very classical question, going back to 19th century enumerative geometry.  We will make no attempt to relate the history of this, but would like to point out the early paper \cite{semple} by Semple, who was likely the first to think of these completions as (in modern language) moduli spaces, which will be central theme in our discussion.  

In general one can think of this question as a special case of spherical embeddings (thinking of $G$ as $(G\times G)/ G$), and use the Luna-Vust theory that classifies these \cite{luna-vust}, which was developed from the early 1980s.  Around the same time, in another seminal work \cite{DCP1,DCP2}, De Concini and Procesi gave a description of embeddings of symmetric varieties, and in particular showed that every \emph{adjoint} group (a reductive group with trivial center, hence necessarily semi-simple) has a canonical smooth completion, which is known as the \emph{wonderful} compactification.  For a general description of equivariant completions of reductive groups, see \cite{timashev}.

\subsection{}An impetus for looking for completions of $G$ as moduli spaces came from the search of a universal compactification of the moduli space of $G$-principal bundles on 
curves, as advocated by Sheshadri.  Indeed, if $X$ is a complete curve with a single nodal singularity whose normalization $\widetilde{X}$ is connected, then any $G$-bundle on $X$ corresponds to a $G$-bundle on $\widetilde{X}$, equipped with an isomorphism between the fibres over the exceptional points of $\tilde{X}$.  This isomorphism can be changed by the action of $G$, which indicates that any suitable compactification of the moduli problem of bundles-on-curves lying over the Deligne-Mumford compactification of the moduli space of curves would give rise to a compactification of the group $G$ itself.
Motivated by this, Kausz developed a particular modular compactification of $GL(n)$ \cite{kausz}.

\subsection{}In \cite{MT1}, Thaddeus and the author introduced a general moduli problem of framed bundle chains.  This is represented by a stack, such that any (torsion-free stacky simplicial) fan with support in the positive Weyl chamber of $G$ determines a stability condition on this stack.  Imposing this condition cuts out a substack, whose coarse moduli space is the toroidal embedding of the group $G$ determined by the fan.  All toroidal embeddings of $G$ can be understood this way.

We will review this work here, indicating how these stacks can be described as global quotients of reductive monoids using work of Vinberg, and show how the simplicial condition can be relaxed, to give Artin moduli stacks with good moduli spaces (in the sense of Alper \cite{alper}).  

Next we show how semi-projective toroidal embeddings of complex groups can be understood in real symplectic geometry as non-abelian cuts of the cotangent bundle of a compact form of the group.  
Finally we conclude with a list of open problems and questions related to the moduli problem we described.  The aim of this note is to provide a readable introduction, for more technical details we refer to \cite{MT1,MT2,MT3}.

\subsection{General setup}
Throughout we will let $G$ be a connected split reductive group over an arbitrary field $k$ (though at some points we will restrict to $k$ algebraically closed, or even complex), with a chosen split maximal torus $T$, center $Z_G$, character lattice $V=\Hom(T,\mathbb{G}_m)$ and co-character lattice $\Lambda=\Hom(\mathbb{G}_m,T)$.  Furthermore we will choose a Borel subgroup $B$, corresponding to positive simple roots $\alpha_i$ and positive Weyl chamber $\Lambda_{\mathbb{Q}}^+\subset \Lambda \otimes_{\mathbb{Z}}\mathbb{Q}$.  

For our purposes a $G$-principal bundle will be a $G$-torsor that is locally trivial in the \'etale topology.  For technical reasons we ask these to be rationally trivial (that is, trivial on the generic point of each component) --- this is automatically satisfied if $k$ is algebraically closed or of characteristic zero.  

A polyhedron will be a subset of a vector space cut out by a finite number of linear inequalities -- so in particular, it need not be bounded.  Finally we shall say that a variety is semi-projective if it is projective over an affine variety.

\section{The moduli problem}
We want to consider the moduli problem of \emph{framed bundle chains}.  
\subsection{}The objects we want to study are $G$-principal bundles over chains of projective lines meeting in nodal singularities. The chains are connected, but their length is a priori arbitrary (though having at least one component) and allowed to vary in families.  We do not want the chain to `flip' in a family though.  To this end, we ask for a consistent choice of \emph{endpoints} $p_+$ and $p_-$ in the chain (which are encoded, for each family, by two sections of the chain, taking values in the extremal components over any closed point).  

\begin{figure}[h]                                                               
\begin{center}                                                                  
\begin{tikzpicture}[scale=1]                                                    
\draw[thick] (1.5, .375) -- (-1.5, -.375);                                
\draw[thick] (.5, .375) -- (3.5, -.375);                                  
\draw[thick] (5.5, .375) -- (2.5, -.375);                                 
\draw[thick] (4.5, .375) -- (7.5, -.375);                                 
\draw[thick] (9.5, .375) -- (6.5, -.375);                                 
\draw[thick] (8.5, .375) -- (11.5, -.375);                                
\filldraw (-1, -.25) circle (1.5pt);                                            
\draw (-1, -.2) node[anchor= south] {$p_+$};                                    
\filldraw (11, -.25) circle (1.5pt);                                            
\draw (11, -.2) node[anchor= south] {$p_-$};                                    
\end{tikzpicture}                                                               
\end{center}                                                                    
\caption{A chain of lines}                                                      
\end{figure}
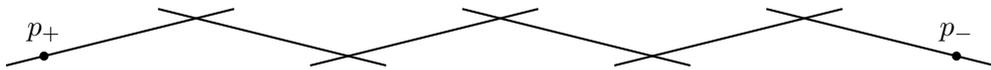

Each such family of chains of projective lines comes with a canonical action of the multiplicative group $\mathbb{G}_m$ \cite[Theorem 1.6]{MT1}.  We ask for all our $G$-bundles to be equivariant with respect to this $\mathbb{G}_m$-action (i.e. to be equipped with the lift of this action to the total space of the bundle, commuting with the $G$-action).  Finally, we want the bundles to be \emph{framed} over $p_+$ and $p_-$, in a $\mathbb{G}_m$-invariant way. That is, the sections $p_+$ and $p_-$ are lifted to the total space of the bundle, and these sections are invariant under the action of $\mathbb{G}_m$.  

\subsection{}Before we can proceed, we need to discuss a description of the isomorphism classes of bundle chains.  Recall that the classical theorem of Birkhoff-Grothendieck-Harder \cite{groth, harder} classifies isomorphism classes of $G$-principal bundles on a single projective line: they are given by elements of $\Lambda/W$.  Naively, but not incorrectly, one can view this theorem as follows: write $\mathbb{P}^1$ by stereographic projection as $\mathbb{A}^1\cup \mathbb{A}^1$ (where the two copies of the affine line are glued together along $\mathbb{A}^1\cap\mathbb{A}^1\cong \mathbb{G}_m)$.    A $G$-principal bundle on $\mathbb{P}^1$ trivialises on each of the $\mathbb{A}^1$s, and is determined by a single clutching function $\mathbb{A}^1\cap \mathbb{A}^1\rightarrow G$. The theorem can be interpreted as saying that this clutching function can always be taken to be a group homomorphism $\rho:\mathbb{G}_m\rightarrow G$, which then can be conjugated to take values in $T\subset G$, and hence gives an element of $\Lambda$.  Finally the action of $W$ gives rise to isomorphic bundles.

We need a generalisation of this in two directions: we want to work with chains of $\mathbb{P}^1$s, not just single copies, and we want to account for the $\mathbb{G}_m$-equivariance.  This generalisation was given in \cite{MT3}.  To summarise it, we can again first look at a single $\mathbb{P}^1$.  By using the $\mathbb{G}_m$-fixed points for stereographic projection, we can again think of $\mathbb{P}^1$ as covered by two copies of $\mathbb{A}^1$, equipped with linear $\mathbb{G}_m$-actions of weights $1$ and $-1$ respectively.  Any $\mathbb{G}_m$-equivariant $G$-principal bundle on $\mathbb{P}^1$ again trivializes on each $\mathbb{A}^1$, and for each the $\mathbb{G}_m$-equivariance can just be characterised by indicating how $\mathbb{G}_m$ acts on the fibre over $p_+$ or $p_-$.  This is given by two elements $\rho_+,\rho_-\in \Lambda$, and it is straightforward to see that the only way to glue two such bundles on $\mathbb{A}^1$ to get a bundle on $\mathbb{P}^1$ is to use $\rho=\rho_+-\rho_-$ as clutching function.  Finally, the diagonal action of $W$ gives again rise to isomorphic bundles, and hence we see that the isomorphism classes of $\mathbb{G}_m$-equivariant $G$-principal bundles are given by $\Lambda^2/W$.

Moreover, for chains this story goes through line by line, and finally we obtain that the isomorphism classes of $\mathbb{G}_m$-equivariant $G$-principal bundles on a chain of $\mathbb{P}^1$s of length $n$ is given by assigning an element of $\Lambda$ to each $\mathbb{G}_m$-fixed point of the chain, and this assignment is again unique up to the diagonal action of $W$.  So we end up with $\Lambda^{n+1}/W$.

\subsection{}Now if we go back to our moduli problem of framed bundle chains, we see that, since our bundles are framed at $p_+$ and $p_-$ (and the framing is preserved by $\mathbb{G}_m$), the co-characters are necessarily zero there, and hence the only (potentially) non-trivially co-characters are assigned to the nodes .  So for a single framed bundle chain over a chain of lenght $n+1$, the isomorphism class of the bundle (forgetting the framing) is given by an element of $\Lambda^n/W$, where the Weyl group $W$ acts diagonally.  We shall refer to this data as the \emph{splitting type} of the bundle.

We shall begin by making the restriction that we only consider bundles where all the elements in $\Lambda^n$ making up its splitting type lie in the same Weyl chamber (this condition is preserved by the action of $W$ of course).  It is shown in \cite[\S 3]{MT1} that this moduli problem determines a smooth Artin stack (non-separated and of infinite type).  Moreover, the group $G\times G$ acts on this stack, by changing the framing over $p_+$ and $p_-$.   Finally, this stack contains $G\times B\mathbb{G}_m$ as the open substack corresponding to those framed bundle chain where all chains have length one. 
To see the last statement, remark that any $\mathbb{G}_m$-equivariant framed bundle on a single projective line is necessarily trivial, and hence the only thing that can vary are the framings.  These can be changed by $G\times G$, but as we only consider these up to isomorphisms, we can quotient out the diagonal $G$.  This gives a family parametrized by $G\cong (G\times G)/G$, but as the $\mathbb{G}_m$-equivariance is built into the moduli problem, we really have the gerbe $G\times B\mathbb{G}_m$.  

\subsection{}There is a general process of \emph{rigidification} \cite[\S 5.1]{ACV} that we can use to remove the $\mathbb{G}_m$ from the automorphism (see \cite[p. 76]{MT1}).   We will refer to the rigidified stack as $\mathcal{M}$.  Note that in \cite{MT1} this notation is used for the open substack where furthermore all vectors in a splitting type are linearly independent.  As we want to consider non-simplical fans below, we shall not make this extra restriction here (this is the only instance where our notation will deviate from that of \cite{MT1}).  This stack $\mathcal{M}$ is still $G\times G$-equivariant, and honestly contains $G$ as an open substack.

\section{The stability condition}
\subsection{}There is no hope for the stack $\mathcal{M}$ to have a coarse (or good) moduli space.  We will however now formulate stability conditions which cut out open sub-stacks that do have these.  

Begin by choosing a fan  $\Sigma$ (in the sense of toric varieties) with support in the positive Weyl chamber of $G$.  In fact we want a little bit more: we also want a choice of an non-zero element of $\Lambda$ on each ray, which we will refer to as a \emph{ray vector}, i.e. we want to have a (torsion-free) \emph{stacky} fan in the sense of \cite{BCS, FMN}.  Strictly speaking stacky fans have to be simplicial, and \cite{MT1} was restricted to this case as well; we will show below in Section \ref{nonsimp} that this can be relaxed to arbitrary fans, but for now we shall restrict ourselves to the simplicial case.  Finally we need to make a choice of ordering of the ray vectors $\beta_1,\ldots,\beta_n$.  Absolutely any ordering is fine, see Section \ref{orderings} below for more comments about this.

\subsection{}With such a choice of stacky fan and ordering of the ray vectors, we can now give the corresponding stability condition: a framed bundle chain is stable if its isomorphism class is determined by a tuple of co-characters that all lie in the same Weyl chamber (which can be chosen to be the postive Weyl chamber), who moreover are all distinct ray vectors of a single cone in the fan, and occur in the chosen order from $p_+$ to $p_-$.  See Figure \ref{pic1} for an illustration.

\begin{figure}[h]
\begin{center}
\begin{tikzpicture}

\newcommand{\dubbellinks}[1]
{\draw[line width=1pt](0,0)-- (.5,2.4); 
\draw[line width=1pt](.5,1.6)-- (0,4); 
\filldraw (.078,.4) circle (2pt);
\filldraw (.078,3.6) circle (2pt);
\draw (.15, .4) node[anchor=  east] {};
\draw (.15, .4) node[anchor=  west] {};
\draw (.15, 3.6) node[anchor=  east] {};
\draw (.15, 3.6) node[anchor=  west] {};
\draw ( .4,2) node[anchor = west]{#1};
}

\newcommand{\trippel}[2]
{
\draw[line width=1pt](0,0)--(-.5,1.6);
\draw[line width=1pt](-.5,1.2)--(0,2.8);
\draw[line width=1pt](0,2.4)--(-.5,4);
\filldraw (-0.13,.4) circle (2pt);
\filldraw (-0.37,3.6) circle (2pt);
\draw (-.1, .4) node[anchor=  east] {};
\draw (-.1, .4) node[anchor=  west] {};
\draw (-.3, 3.6) node[anchor=  east] {};
\draw (-.3, 3.6) node[anchor=  west] {};
\draw (-.4,1.466) node[anchor=west]{#2};
\draw (-.1,2.533) node[anchor=west]{#1};
}

\begin{scope}[xshift=-1.8cm, yshift=0cm, scale=1.3]

\fill[gray!30!white] (0,0) -- (3,0) .. controls (3,1) and (2.33, 2.1) .. (1.5, 2.6) -- cycle ;
\foreach \x in {-2,...,2}
\foreach \y in { -1.73, 0, 1.73}
 \foreach \z in {-2.5, -1.5,...,2.5}
\foreach \w in {-.866,.866}
{\filldraw[gray!60!white]  	(\x  , \y ) 		circle 	(1.5pt);
\filldraw[gray!60!white]   (\z , \w )	circle 	(1.5pt);}

\filldraw[gray!60!white]  	(.5  , 2.6 ) circle 	(1.5pt);
\filldraw[gray!60!white]	(-.5  , 2.6 ) circle 	(1.5pt);
\filldraw[gray!60!white]  	(-.5  , -2.6 ) circle 	(1.5pt);
\filldraw[gray!60!white]  	(.5  , -2.6 ) circle 	(1.5pt);
\filldraw[gray!60!white]  	(1.5  , 2.6 ) circle 	(1.5pt);
\filldraw[gray!60!white] 	(-1.5  , 2.6 ) circle 	(1.5pt);
\filldraw[gray!60!white]  	(1.5  , -2.6 ) circle 	(1.5pt);
\filldraw[gray!60!white]  	(-1.5  , -2.6 ) circle 	(1.5pt);
\filldraw[gray!60!white]  	(-3  , 0 ) circle 	(1.5pt);
\filldraw[gray!60!white]  	(3  , 0 ) circle 	(1.5pt);

\filldraw [black] 	(1,1.73 ) circle 	(1.5pt);
\filldraw [black] 	(2,1.73 ) circle 	(1.5pt);
\filldraw [black] 	(1,0 ) circle 	(1.5pt);
\filldraw [black] 	(-.5,-.866 ) circle 	(1.5pt);
\filldraw [black] 	(-2.5,-.866) circle 	(1.5pt);
\filldraw [black] 	(-2,0 ) circle 	(1.5pt);

\draw[very thick] (-3,0) -- (3,0);
\draw[very thick] (-1.5,-2.6) -- (1.5,2.6);
\draw[very thick] (1.5,-2.6) -- (-1.5,2.6);
\draw[dashed] (0,0) -- (2.25,1.946);
\draw[dashed] (0,0) -- (2.25,-1.946);
\draw[dashed] (0,0) -- (0.5625,2.925);
\draw[dashed] (0,0) -- (0.5625,-2.925);
\draw[dashed] (0,0) -- (-2.8125,.97425);
\draw[dashed] (0,0) -- (-2.8125,-.97425);

\draw (1,1.73) node[anchor = east] {$\scriptstyle{\beta_1}$};
\draw (2,1.73) node[anchor = west] {$\scriptstyle{\beta_2}$};
\draw (1,0) node[anchor = north] {$\scriptstyle{\beta_3}$};
\draw (-2,0) node[anchor = north] {$\scriptstyle{w\beta_1}$};
\draw (-2.5,-.866) node[anchor = north] {$\scriptstyle{w\beta_2}$};
\draw (-.5,-.866) node[anchor = east] {$\scriptstyle{w\beta_3}$};
\end{scope}

\begin{scope}[xshift=-8.5cm,yshift=-9cm, scale=.8]
\draw (-1,5) node[anchor = west] {\phantom{xxxx}\small $\Sigma$-stable bundle chains include};

\begin{scope}[xshift=0cm, yshift= 2cm]
\draw[line width=1pt](0,2)--(0,-2);
\filldraw (0,-1.6) circle (2pt);
\draw (0.1, -1.6) node[anchor=  east] {};
\draw (0.1, -1.6) node[anchor=  west] {};
\filldraw (0,1.6) circle (2pt);
\draw (0.1, 1.6) node[anchor=  east] {};
\draw (0.1, 1.6) node[anchor=  west] {};
\end{scope}

\begin{scope}[xshift=1.5cm,yshift=0]
\dubbellinks{$\scriptstyle{\beta_1}$}
\end{scope}

\begin{scope}[xshift=3.2cm, yshift=0]
\dubbellinks{$\scriptstyle{w\beta_3}$}
\end{scope}

\begin{scope}[xshift=5.5cm,yshift=0]
\trippel{$\scriptstyle{\beta_2}$}{$\scriptstyle{\beta_3}$}
\end{scope}

\begin{scope}[xshift=7.3cm,yshift=0]
\trippel{$\scriptstyle{w\beta_1}$}{$\scriptstyle{w\beta_2}$}
\end{scope}

\end{scope}

\begin{scope}[xshift=.5cm,yshift=-9cm, scale=.8]
\draw (-2,5) node [anchor = west]{\phantom{xxxx}\small $\Sigma$-unstable bundle chains include};

\begin{scope}[xshift=0cm,yshift=0]
\trippel{$\scriptstyle{\beta_2}$}{$\scriptstyle{\beta_1}$}
\end{scope}

\begin{scope}[xshift=2cm,yshift=0]
\trippel{$\scriptstyle{\beta_1}$}{$\scriptstyle{\beta_3}$}
\end{scope}

\begin{scope}[xshift=4cm,yshift=0]
\trippel{$\scriptstyle{\beta_2}$}{$\scriptstyle{w\beta_3}$}
\end{scope}

\begin{scope}[xshift=5.5cm, yshift= 0cm]
{\draw[line width=1pt](0,0)-- (.5,1.2); 
\draw[line width=1pt](.5,0.8)-- (0,2.2); 
\draw[line width=1pt](0,1.8)-- (.5,3.2); 
\draw[line width=1pt](.5,2.8)-- (0,4); 
\filldraw (.165,.4) circle (2pt);
\filldraw (.165,3.6) circle (2pt);
\draw (.22, .4) node[anchor=  east] {};
\draw (.22, .4) node[anchor=  west] {};
\draw (.22, 3.6) node[anchor=  east] {};
\draw (.22, 3.6) node[anchor=  west] {};}
\draw (.4, 3) node[anchor=  west] {$\scriptstyle{\beta_1}$};
\draw (0, 2) node[anchor=  west] {$\scriptstyle{\beta_2}$};
\draw (.4, 1) node[anchor=  west] {$\scriptstyle{\beta_3}$};
\end{scope}

\end{scope}

\end{tikzpicture}\\ \
\end{center}
\caption{Illustration of a stability condition determined by a stacky fan}\label{pic1}
\end{figure}

Imposing the stability condition cuts out an open substack $\mathcal{M}_G(\Sigma)\subset\mathcal{M}$.
The main result of \cite{MT1} is now the following: 

\begin{theorem}
The stack $\mathcal{M}_G(\Sigma)$ is a smooth, separated tame stack (hence Deligne-Mumford if $k$ has characteristic zero), which is proper if the support of $\Sigma$ is all of the Weyl chamber.  The coarse moduli space of $\mathcal{M}_G(\Sigma)$ is the toroidal spherical embedding of $(G\times G)/G$ given by the uncolored fan $w\Sigma$, where $w$ is the longest element in $W$.
\end{theorem}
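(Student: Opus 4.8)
The plan is to establish the three assertions --- smoothness and tameness, separatedness and properness, and the identification of the coarse space --- by increasingly global arguments that always reduce to the combinatorics of $\Sigma$. Smoothness requires no new work: $\mathcal{M}_G(\Sigma)$ is by construction an \emph{open} substack of the smooth Artin stack $\mathcal{M}$, hence is itself smooth. The substantive local input is a structure theorem. Near a stable framed bundle chain whose splitting type uses exactly the ray vectors $\beta_{i_1},\dots,\beta_{i_k}$ of a single cone $\sigma\in\Sigma$, I would compute the deformation theory: deformations arise from smoothing the $k$ nodes of the chain, together with deforming the underlying bundle and moving the framings. The smoothing parameters, weighted by the $\mathbb{G}_m$-action and recorded against the lattice $\Lambda$, assemble into an étale-local chart presenting $\mathcal{M}_G(\Sigma)$ cone by cone as a quotient stack $[U_\sigma/H_\sigma]$, where $U_\sigma$ is the affine toric chart of $\sigma$ and $H_\sigma$ is the diagonalizable group encoding the torsion in $\Lambda$ modulo the sublattice spanned by the ray vectors of $\sigma$ (the standard stacky-fan recipe). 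Since the $H_\sigma$ are diagonalizable, they are linearly reductive in every characteristic, which immediately gives that the stack is \emph{tame}, and Deligne--Mumford precisely when these finite groups have order prime to the characteristic, in particular always in characteristic zero.

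With these charts in hand, separatedness and properness become the valuative criterion, which I would phrase as a semistable-reduction statement for framed bundle chains. Given a family over the punctured spectrum of a discrete valuation ring, the generic splitting type is a tuple of co-characters lying in a single cone, and extending across the closed point amounts to choosing which components sprout and to which face the splitting type specialises. \emph{Uniqueness} of the limit --- hence separatedness, which holds for any $\Sigma$ --- follows from the fan axiom that any two cones meet in a common face, so two competing degenerations of the same generic family are forced to agree on the overlap. \emph{Existence} of the limit --- hence properness --- holds exactly when every generic splitting type, which after applying $W$ lies in the positive Weyl chamber, is contained in some cone of $\Sigma$; this is guaranteed precisely when the support of $\Sigma$ is the whole chamber, matching the statement.

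It remains to identify the coarse space $X$. Since $\mathcal{M}_G(\Sigma)$ honestly contains $G$ as a dense open substack and its $G\times G$-action restricts to two-sided translation on $G$, the coarse space is a normal $G\times G$-equivariant embedding of $G=(G\times G)/G$, so by Luna--Vust and De Concini--Procesi it is classified by a colored fan in $\Lambda_{\mathbb{Q}}$ together with its colors. Two things must then be verified. First, that the embedding is \emph{toroidal}: the boundary $X\setminus G$ consists of $G\times G$-stable divisors indexed by the rays of $\Sigma$, arising from the toric smoothing parameters of the charts $[U_\sigma/H_\sigma]$, so no color enters the fan data. Second, to compute the fan I would restrict to the closure $\overline{T}$ of the maximal torus in $X$, a $W$-equivariant toric variety whose fan can be read directly off the charts as it records exactly which one-parameter subgroups of $T$ acquire limits, recovering the cones of $\Sigma$. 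A toroidal embedding of the group is classified by a fan supported in its valuation cone, which is the \emph{antidominant} Weyl chamber, while $\Sigma$ lives in the dominant chamber by construction; since the longest element $w$ carries the dominant chamber isomorphically onto the antidominant one, the resulting embedding is the one attached to the uncolored fan $w\Sigma$.

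The main obstacle I expect is the valuative criterion: carrying out the semistable reduction for chains --- rigorously controlling the appearance and disappearance of components and the specialisation of splitting types to faces, and showing that the fan axioms translate \emph{exactly} into uniqueness and completeness --- is the analytic heart of the proof. A secondary but genuine subtlety is getting the Weyl element right: the passage from the intrinsic $p_+\to p_-$ ordering and the weight-$(1,-1)$ normalisation of the $\mathbb{G}_m$-action to the external valuation-cone convention must be tracked carefully so as to produce $w\Sigma$ rather than $\Sigma$.
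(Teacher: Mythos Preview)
Your plan is coherent but takes a genuinely different route from the paper's (which summarises \cite{MT1}). There the theorem is not proved by intrinsic deformation theory and the valuative criterion; instead one first establishes a single global isomorphism
\[
\mathcal{M}_G(\Sigma)\;\cong\;\bigl[S_{G,\beta}^0\,\big/\,\mathbb{G}_\beta\bigr],
\]
where $S_{G,\beta}^0=\mathbb{A}_\beta^0\times_{\mathbb{A}}S_G^0$ is an explicit quasi-affine open in the fibred product of Cox's affine space with Vinberg's enveloping monoid, and $\mathbb{G}_\beta$ is a torus. Once this presentation is in place, every assertion follows almost formally: smoothness from smoothness of $S_{G,\beta}^0$, tameness because all stabilisers sit inside a torus and are therefore diagonalizable, separatedness and properness (for polar $\Sigma$) from GIT and otherwise by gluing over cones, and the identification of the coarse space by recognising the categorical quotient $S_{G,\beta}^0/\mathbb{G}_\beta$ as the toroidal spherical variety with fan $w\Sigma$. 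Your route trades this one heavy isomorphism for several lighter but more delicate moduli-theoretic arguments (local charts from deformations, semistable reduction for chains, Luna--Vust recognition of the coarse space); it is conceptually more intrinsic, but each step requires real work that the global quotient sidesteps.

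There is also one genuine imprecision in your sketch. The \'etale-local chart you write down, $[U_\sigma/H_\sigma]$ with $U_\sigma$ the affine toric chart of $\sigma$, cannot be correct as stated: $U_\sigma$ has dimension $\operatorname{rank}(T)$, whereas $\mathcal{M}_G(\Sigma)$ has dimension $\dim G$. What you presumably intend is that the stack is \'etale-locally $[(U_\sigma\times Y)/H_\sigma]$ with $Y$ smooth and $H_\sigma$ acting trivially on $Y$, so that the stacky structure is entirely transverse and toric. This is precisely what the Cox--Vinberg picture makes visible --- the $\mathbb{G}_\beta$-stabilisers live only in the $\mathbb{A}_\beta^0$-factor, while the $S_G^0$-fibre (which supplies the missing $\dim G-\operatorname{rank}(T)$ directions, coming from the framings and the bundle) is acted on freely. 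Establishing that splitting directly from the deformation theory of framed bundle chains, without invoking $S_G$, is exactly where your programme would need the most care; until that is done, the conclusion that the isotropy groups are the finite diagonalizable $H_\sigma$ (and hence tameness) is asserted rather than proved.
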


Note that though the stability condition was dependent on the choice of ordering of the ray vectors $(\beta_1, \ldots , \beta_n)$ in the stacky fan, for different choices the resulting stacks are canonically isomorphic.

\section{The Cox-Vinberg quotient construction}
In this section we will show that the stacks $\mathcal{M}_G(\Sigma)$ discussed above can actually be obtained as quotients of a quasi-affine variety by a torus.  This quasi-affine variety is an open subvariety of a reductive monoid, and if the fan $\Sigma$ is polar (i.e. coming from a polytope) such that $W\Sigma$ has convex support, it can be understood as a \mbox{(semi-\!)} stable locus in the sense of Geometric Invariant Theory (which guarantees that the corresponding coarse moduli space is projective over an affine, and indeed projective if the fan $W\Sigma$ was complete).

\subsection{The Cox construction revisited}
The first ingredient that we need is the classical Cox constructions for toric varieties.  Let us briefly recall how this goes: suppose $X$ is a toric variety for a torus $T$, given by a fan $\Sigma$ with $N$ rays.  As before we assume ray vectors $\beta_i\in\Lambda$ chosen; if one is only interested in toric varieties one can simply chose the first non-zero elements of $\Lambda$ along the rays of $\Sigma$ for these.  

The standard presentation of the Cox construction assumes that the $\beta_i$ generate $\Lambda\otimes_{\mathbb{Z}}\mathbb{Q}$ (this is always the case if $X$, or equivalently $\Sigma$, is complete).  In this case we have a short exact sequence (where $\mathbb{G}_{\beta}:=\mathbb{G}_m^N$)

\begin{equation}\label{SES}
\begin{tikzcd}
1\ar[r] &L\ar[r] &\mathbb{G}_{\beta} \ar[rr,"{(\beta_1,\dots,\beta_N)}"] &
&T\ar[r] &1.
\end{tikzcd}
\end{equation}

Remark that this sequence only depends on the ray(vector)s of $\Sigma$, not on the full fan structure.  Cox now defines an open subvariety $\mathbb{A}^0_{\beta}$ inside $\mathbb{A}_{\beta}:=\mathbb{A}^N$ as the union of the $U_{\sigma}$, where for every cone $\sigma$ of $\Sigma$ we define $$U_{\sigma}=\{ (x_1,\ldots, x_N)\in \mathbb{A}_{\beta}\ |\ x_i\neq 0 \text{ if } \beta_i \text{ is not a ray vector of }\sigma\}$$ (so  $\mathbb{A}_{\beta}^0$ does depend on all the data in the fan). 

Cox shows that the toric variety $X$ can be understood as the categorical quotient $\mathbb{A}_{\beta}^0/L$, which is a geometric quotient when $\Sigma$ is simplicial.  Moreover, if $\Sigma$ is the normal fan to a polyhedron (which corresponds to $X$ being projective over an affine variety), then $\mathbb{A}^0_{\beta}/L$ can be understood as the semi-stable locus in the sense of geometric invariant theory (in fact stable if $\Sigma$ is simplicial).  Finally, if $\Sigma$ is simplicial, then also for the corresponding toric orbifolds we have $X\cong [\mathbb{A}_{\beta}^0/K]$.

\subsection{}\label{cox-altern}
Now, it turns out that the condition that the $\beta_i$ generate $\Lambda\otimes \mathbb{Q}$ can be relaxed in an easy way: even if this is not the case, i.e. when the sequence (\ref{SES}) is no longer exact on the right (for instance in the extreme case where $X=T$, where there are no $\beta_i$ at all), we still have that $$X\cong \left(\mathbb{A}^0_{\beta}\times T\right)/\mathbb{G}_{\beta},$$ and all the above statements (about geometric vs categorical quotients, orbifolds, and GIT) remain valid in this generality. 
Whereas $\mathbb{A}_{\beta}$ was $\Spec$ of the Cox ring of $X$, $\mathbb{A}_{\beta}\times T$ is $\Spec$ of the \emph{equivariant} Cox ring -- using the equivariance in this way is exactly what compensates for the lack of compactness.

So this generalizes the Cox construction to all toric varieties, at no additional cost, but more importantly for us this  variation on the Cox constrution also has a non-abelian generalization.  For this we need an extra ingredient: the Vinberg monoid (a.k.a. enveloping semigroup) of $G$.

\subsection{The Vinberg monoid}

The Vinberg monoid $S_G$, introduced in \cite{vinberg}, is a reductive monoid (i.e. a monoid object in the category of (affine) algebraic varieties, whose group of units is a reductive group) that one can canonically associate to a reductive group $G$.  Its group of units is $G_{\enh}=(G\times T) /Z_G $, and it comes with a canonical monoid morphism to the affine space $\mathbb{A}=S_G/\!\!/ (G\times G)$, which can be understood as the toric variety for the group $T/Z_G$ corresponding to the positive Weyl chamber (note that this is indeed an affine space, as the primitive elements along the rays are the fundamental co-weights, which indeed generate the co-weight lattice).  

At least for $k$ algebraically closed of characteristic zero it can be constructed explicitly, as $\Spec$ of a subring of the ring of regular functions of $G_{\enh}$.  By the algebraic Peter-Weyl theorem, the latter can be decomposed as $$k[G_{\enh}]=\bigoplus_{(\lambda,\mu)\in V^+_{G_{\enh}}} k[G_{\enh}]_{(\lambda,\mu)},$$ where $k[G_{\enh}]_{(\lambda,\mu)}$ denotes the isotypical components of type $(\lambda,\mu)$ (which one can think of as the matrix coefficient for the corresponding highest weight representations), and the character lattice of $G_{\enh}$ can be described as $$V_{G_{\enh}}=\Big\{(\lambda,\mu)\in V_G^2\ |\ \mu-\lambda = \sum_i m_i \alpha_i, \text{ with all }m_i \in \mathbb{Z} \Big\}$$ (here the $\alpha_i$ are the positive simple roots of $G$).  In this language we have $$k[S_G]=\bigoplus_{(\lambda,\mu)\in V^+_{G_{\enh}}\cap Q_G} k[G_{\enh}]_{(\lambda,\mu)}, $$ where $Q_G$ is the cone consisting of those $(\lambda,\mu)$ such that $\mu-\lambda=\sum_i m_i\alpha_i$ where all $m_i\geq 0$.  A description of $S_G$ in arbitrary characteristic (for $k$ algebraically closed) was given in \cite{rittatore}.

Vinberg showed (for $G$ semisimple) that $S_G$ satisfies a universal property in the category of reductive monoids \cite[Theorem 5]{vinberg}: essentially it says that any reductive monoid $S$ satisfying some natural conditions is obtained from $S_G$, where $G=[S^{\times}, S^{\times}]$ (here $S^{\times}$ is the group of units of $G$), as the base change of $S_G\rightarrow \mathbb{A}$ by a toric morphism of an affine toric variety to $\mathbb{A}$.  
From our point of view it is useful to think of $S_G$ as a (universal) degeneration of $G$ (the generic fibres of $S_G\rightarrow \mathbb{A}$ are indeed isomorphic to $G$).

Vinberg studied $S_G$ extensively; in particular he introduced a certain open subvariety $S_G^0$ (not a submonoid) of $S_G$, and showed that the geometric quotient $S_G^0/T$ is the wonderful compactification of $G_{\text{ad}}=G/Z$ of  De Concini-Procesi \cite{DCP1}.  It was shown in \cite[Theorem 5.4]{MT1} that $S_G^0$ can also be understood as a GIT-stable set for a suitable linearization of the action of $T$ on $S_G$. 

 \subsection{The Cox-Vinberg hybrid}
 It turns out that the Cox construction (or rather the variation thereof sketched above) and the Vinberg construction of the wonderful compactification of $G_{\text{ad}}$ can be hybridized in a very natural way.  Indeed, as all of the $\beta_i$ are in the positive Weyl chamber of $G$, the morphisms $\beta_i:\mathbb{G}_m\rightarrow T$ naturally induce monoid morphisms $\mathbb{A}^1\rightarrow \mathbb{A}$.  These combine into a monoid morphism $\mathbb{A}_{\beta}\rightarrow \mathbb{A}$, and  we can now look at the reductive monoid $S_{G,\beta}$  arising as the fibred product $\mathbb{A}_{\beta}\times_{\mathbb{A}} S_G$.  Inside of $S_{G,\beta}$ we can now again look at an open subvariety $S^0_{G,\beta}:=\mathbb{A}_{\beta}^0\times_{\mathbb{A}} S_G^0$.  If $\Sigma$ is a polar fan this can again be understood to be a GIT semi-stable locus  \cite[Theorem 8.1]{MT1} (stable if $\Sigma$ is simplicial).  In \cite[proof of Theorem 6.4]{MT1} a family of framed bundle chains was constructed over $\mathbb{A}_{\beta}\times_{\mathbb{A}} S_G^0$, which induces an isomorphism of stacks $$\left[S_{G,\beta}^0/\mathbb{G}_{\beta}\right]\cong \mathcal{M}_G(\Sigma).$$

\subsection{}In some sense one could think of this Cox-Vinberg monoid $S_{G,\beta}$ as playing a similar rol to $\mathcal{M}_G(\Sigma)$ as the quot-scheme plays to the moduli space of bundles on a smooth projective curve.  There is a crucial difference however: we don't know what moduli problem these $S_{G,\beta}$ represent, and in fact are only able to construct a family of framed bundle chains over the open subvariety $\mathbb{A}_{\beta}\times_{\mathbb{A}}S_{G}^0$.  It is very tempting to search for a modular interpretation of all of the monoid; see Section \ref{modularvinberg} below.

\section{The Non-Simplicial / Artin case}\label{nonsimp}

\subsection{}As mentioned above, the treatment in \cite{MT1} was restricted to stability conditions given by \emph{simplicial} (stacky) fans, which lead to moduli stacks that are smooth Deligne-Mumford or tame.  We will show here that with little extra effort one can remove the simplicial condition, leading to moduli stacks that are Artin but still have so-called \emph{good} moduli spaces, in the style of Alper \cite{alper}.  To this end we shall from now on understand the notion of a stacky fan $\Sigma$ to mean an arbitrary fan (i.e. not necessarily simplicial), together with a non-zero ray vector $\beta_i\in\Lambda_G$ on each  of its rays, as well as an ordering of these ray vectors.

The approach we shall take is to generalise the Cox-Vinberg construction.  This is not the only possible path, but in the style of this note we prefer it as it extends the results in the most economical fashion.  As we want to use the gluing results of good moduli spaces from \cite{alper}, we restrict ourselves now to working over algebraically closed fields of characteristic zero.  Presumably the gluing can be extended to the setting of \emph{adequate} moduli spaces, as in \cite{alper2}, in which case the restriction on characteristic is not necessary.  

Recall also that the construction of the Vinberg monoid over algebraically closed fields of positive characteristic was done by Rittatore in \cite{rittatore} using the Luna-Vust theory of spherical embeddings.  It is widely assumed by the experts that the Vinberg monoid also exists for split reductive groups over arbitrary fields, though we are not aware of any place where this is written down.  The theory of spherical embeddings has been extended by Huruguen \cite{huruguen} to more general settings (including the case of split reductive groups over arbitrary fields); presumably the combination of the work of Huruguen and Rittatore would extend the notion of Vinberg monoid to arbitrary fields, which should also extend the discussion below.

\subsection{}The notion of a good moduli space \cite{alper} for an Artin stack generalizes the relationship between the stack $[X^{\operatorname{ss}}/G]$ and the GIT-quotient $X/\!\!/G$ for the linearized action of a reductive group on a (semi-)projective variety -- here $X^{\operatorname{ss}}$ stands for the semi-stable locus in $X$.  For a general Artin stack $\mathcal{X}$ a good moduli space is an algebraic space $Y$, together with a morphism $\phi:\mathcal{X}\rightarrow Y$ such that the push-forward functor on quasi-coherent sheaves for $\phi$ is exact, and such that the induced morphism on sheaves $\mathcal{O}_Y\rightarrow \phi_*\mathcal{O}_{\mathcal{X}}$ is an isomorphism.  If a good moduli space exists it is unique, in the sense that $\phi$ is universal for maps from $\mathcal{X}$ to algebraic spaces.  

Of importance for us is that good moduli spaces can be glued, in the following sense \cite[\S 7.8]{alper}: if $\phi:\mathcal{X}\rightarrow Y$ is an Artin stack with a good moduli space, an open substack $\mathcal{U}\subset \mathcal{Y}$ is said to be \emph{saturated} if $\phi^{-1}(\phi(\mathcal{U}))=\mathcal{U}$.    We now have 

\begin{proposition}[{\cite[Proposition 7.9]{alper}}]\label{alpgluing}
Suppose $\mathcal{X}$ is an Artin stack covered by open substacks $\mathcal{U}_i$ such that for each $i$ there exist a good moduli space $\phi_i:\mathcal{U}_i\rightarrow Y_i$, with $Y_i$ a scheme.  Then there exist a good moduli space $\phi: \mathcal{X}\rightarrow Y$ and open sub-algebraic spaces $\tilde{Y_i}\subset Y$ such that $\tilde{Y}_i\cong Y_i$ and $\phi^{-1}(\tilde{Y}_i)=\mathcal{U}_i$ if and only if for each $i,j$, $\mathcal{U}_i\cap \mathcal{U}_j$ is saturated for $\phi:\mathcal{U}_i\rightarrow Y_i$.  

\end{proposition}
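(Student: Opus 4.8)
The plan is to prove the two implications separately, with essentially all of the content residing in the "if" direction (a gluing argument); the converse follows formally from the uniqueness of good moduli spaces. For necessity, suppose such a $\phi:\mathcal{X}\to Y$ and opens $\tilde{Y}_i$ exist. Since $\phi^{-1}(\tilde{Y}_i)=\mathcal{U}_i$ and open immersions are flat, the restriction $\phi|_{\mathcal{U}_i}:\mathcal{U}_i\to\tilde{Y}_i\cong Y_i$ is again a good moduli space, so by uniqueness it is canonically identified with $\phi_i$. Now $\mathcal{U}_i\cap\mathcal{U}_j=\phi^{-1}(\tilde{Y}_i\cap\tilde{Y}_j)$, and since $\phi$ is surjective one has $\phi_i(\mathcal{U}_i\cap\mathcal{U}_j)=\tilde{Y}_i\cap\tilde{Y}_j$; hence $\phi_i^{-1}(\phi_i(\mathcal{U}_i\cap\mathcal{U}_j))=\mathcal{U}_i\cap\phi^{-1}(\tilde{Y}_i\cap\tilde{Y}_j)=\mathcal{U}_i\cap\mathcal{U}_j$, i.e. $\mathcal{U}_i\cap\mathcal{U}_j$ is saturated for $\phi_i$.

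For sufficiency I would build $Y$ by gluing the $Y_i$. The two structural facts about good moduli spaces that do the real work are: a saturated open substack $\mathcal{V}\subset\mathcal{U}_i$ has open image $\phi_i(\mathcal{V})\subset Y_i$, and the restriction $\phi_i|_{\mathcal{V}}:\mathcal{V}\to\phi_i(\mathcal{V})$ is again a good moduli space. Applied to $\mathcal{V}=\mathcal{U}_i\cap\mathcal{U}_j$, which is saturated by hypothesis, this produces an open $Y_{ij}:=\phi_i(\mathcal{U}_i\cap\mathcal{U}_j)\subset Y_i$ carrying a good moduli space $\mathcal{U}_i\cap\mathcal{U}_j\to Y_{ij}$; the symmetric construction gives $Y_{ji}\subset Y_j$, and uniqueness of good moduli spaces supplies a canonical isomorphism $\psi_{ij}:Y_{ij}\xrightarrow{\sim}Y_{ji}$ over $\mathcal{U}_i\cap\mathcal{U}_j$. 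For the cocycle condition I would first note that an intersection of saturated opens is saturated, since $\phi_i^{-1}(\phi_i(A\cap B))\subseteq\phi_i^{-1}(\phi_i(A))\cap\phi_i^{-1}(\phi_i(B))=A\cap B$; thus $\mathcal{U}_i\cap\mathcal{U}_j\cap\mathcal{U}_k$ is saturated for each of $\phi_i,\phi_j,\phi_k$, and uniqueness on this triple overlap forces $\psi_{jk}\circ\psi_{ij}=\psi_{ik}$. The $\psi_{ij}$ are then legitimate gluing data, and I would glue the $Y_i$ along the $Y_{ij}$ to obtain an algebraic space $Y$ with open sub-algebraic spaces $\tilde{Y}_i\cong Y_i$; since the $\phi_i$ agree on overlaps, they glue to a morphism $\phi:\mathcal{X}\to Y$ with $\phi^{-1}(\tilde{Y}_i)=\mathcal{U}_i$.

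It remains to verify that the glued $\phi$ is itself a good moduli space, and this is where I expect the main obstacle to lie: everything hinges on the fact that both defining conditions are local on the target. Exactness of $\phi_*$ amounts to surjectivity on quasi-coherent sheaves, which may be tested on the open cover $\{\tilde{Y}_i\}$, and because open immersions are flat one has $(\phi_*\mathcal{F})|_{\tilde{Y}_i}\cong(\phi_i)_*(\mathcal{F}|_{\mathcal{U}_i})$, so exactness on each $\tilde{Y}_i$ yields exactness on $Y$; likewise $\mathcal{O}_Y\to\phi_*\mathcal{O}_{\mathcal{X}}$ restricts to $\mathcal{O}_{Y_i}\to(\phi_i)_*\mathcal{O}_{\mathcal{U}_i}$ on each $\tilde{Y}_i$ and is therefore an isomorphism. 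The genuinely non-formal inputs are thus precisely this locality of the good-moduli-space condition on an open cover of the target, together with the behaviour of good moduli spaces under restriction to saturated opens (open image, and preservation of the good-moduli property); once these are established, the argument is the bookkeeping with gluing data and cocycles sketched above.
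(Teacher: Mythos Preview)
The paper does not give its own proof of this proposition: it is quoted verbatim from \cite[Proposition 7.9]{alper} and then \emph{applied} (in the proof of Theorem 5.5) rather than re-proved. So there is no in-paper argument against which to compare your attempt.

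That said, your sketch is essentially the argument Alper gives. The necessity direction is exactly as you write. For sufficiency, the two substantive inputs you identify---that a saturated open substack has open image and inherits a good moduli space (this is \cite[Remark 6.2]{alper}, which the present paper also invokes), and that the defining conditions for a good moduli space are Zariski-local on the target (this is \cite[Proposition 4.7(i)]{alper}, using flat base change for the pushforward)---are precisely the ones Alper uses, and the rest is the gluing bookkeeping you describe. Your treatment of the cocycle condition via saturatedness of triple intersections and uniqueness of good moduli spaces is correct. One small point worth making explicit: you need that good moduli space morphisms are surjective (so that $\phi_i(\mathcal{U}_i\cap\mathcal{U}_j)$ really equals $\tilde{Y}_i\cap\tilde{Y}_j$ in the necessity argument), which is \cite[Theorem 4.16(i)]{alper}; you use this implicitly but it deserves a citation. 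Otherwise the proposal is sound and matches the original source.
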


\subsection{}
We want to apply Proposition \ref{alpgluing} to the open covering of $\mathcal{M}_G(\Sigma)$ given by the cones $\sigma$ in a stacky fan $\Sigma$ (of course we want to think of these as stacky fans in their own right, so we remember the ray vectors $\beta_i$).   We shall refer to these open substacks as $\mathcal{M}_G(\sigma)$. The criterion will be easy to verify, since we know, by the results on the GIT construction in \cite{alper}, that they hold for those stability conditions that give rise to semi-projective good moduli spaces.

\subsection{}
The conditions under which the stability condition that cuts out $\mathcal{M}_G(\Sigma)$ can be understood as GIT-(semi-)stability for the Cox-Vinberg monoid were described in \cite[\S 8]{MT1}.  Note that we are applying GIT here to all \emph{semi-projective} schemes, i.e.  schemes projective over an affine.  These include all affine and projective varieties, and more precisely are exactly the schemes that can be generated as $\Proj$ of a graded $k$-algebra.  We shall say that a fan is \emph{normal} if it is determined by some polyhedron (with the ray vectors of the fan the outward normal vectors of the polyhedron).  

\begin{theorem}[{\cite[Theorem 8.1]{MT1}}]  If $\Sigma$ is a normal fan such that $W\Sigma$ has convex support then the stack $\mathcal{M}_G(\Sigma)$ is isomorphic to the stack $[S_{G,\beta}^{\operatorname{ss}}/ \mathbb{G}_m]$, where $S_{G,\beta}^{\operatorname{ss}}$ is the GIT-semistable locus, which equals $S_{G,\beta}^0$.
\end{theorem}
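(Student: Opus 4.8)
The plan is to reduce the statement to a single geometric claim. Above we recorded (from the proof of \cite[Theorem 6.4]{MT1}) an isomorphism of stacks $[S^0_{G,\beta}/\mathbb{G}_\beta]\cong\mathcal{M}_G(\Sigma)$, where $S^0_{G,\beta}=\mathbb{A}^0_\beta\times_\mathbb{A} S^0_G$ is an explicitly described open subvariety of the Cox--Vinberg monoid $S_{G,\beta}$. Granting this, the theorem amounts to showing that under the hypotheses --- $\Sigma$ normal and $W\Sigma$ of convex support --- this open subvariety is exactly the semistable locus $S^{\mathrm{ss}}_{G,\beta}$ for a suitable linearization of the $\mathbb{G}_\beta$-action on $S_{G,\beta}$. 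Once $S^{\mathrm{ss}}_{G,\beta}=S^0_{G,\beta}$ is established, substitution into the stack isomorphism yields $\mathcal{M}_G(\Sigma)\cong[S^{\mathrm{ss}}_{G,\beta}/\mathbb{G}_\beta]$, and ordinary GIT identifies the good moduli space with the projective-over-affine quotient $S_{G,\beta}/\!\!/\,\mathbb{G}_\beta$.

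First I would fix the linearization. Since $\Sigma$ is normal it is the normal fan of a polyhedron $P$, to which the Cox construction attaches a character of $\mathbb{G}_\beta$ whose semistable locus in $\mathbb{A}_\beta$ is precisely $\mathbb{A}^0_\beta$; on the other factor, \cite[Theorem 5.4]{MT1} provides a linearization of the torus action on $S_G$ whose stable locus is $S^0_G$. Pulling both back along the two projections of the fibred product and tensoring gives a linearization $\mathcal{L}$ on $S_{G,\beta}$. The basic computational input is then that the Hilbert--Mumford numerical function is additive in the linearization and compatible with equivariant pullback, so that $\mu^{\mathcal{L}}(x,\lambda)=\mu(\mathrm{pr}_{\mathbb{A}_\beta}(x),\lambda)+\mu(\mathrm{pr}_{S_G}(x),\lambda)$ for every one-parameter subgroup $\lambda$ of $\mathbb{G}_\beta$.

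The heart of the proof is the evaluation of this criterion. Because $\mathbb{G}_\beta$ is a torus acting on a quasi-affine variety, (semi)stability is combinatorial: a point is semistable exactly when the origin lies in the weight polytope spanned by its nonvanishing coordinates. The inclusion $S^0_{G,\beta}\subseteq S^{\mathrm{ss}}_{G,\beta}$ is the easy half and falls out coordinatewise from the Cox-semistability of $\mathbb{A}^0_\beta$ together with the Vinberg-stability of $S^0_G$. For the reverse inclusion I would argue that any one-parameter subgroup destabilizing a point of $S_{G,\beta}\setminus S^0_{G,\beta}$ must already destabilize one of the two factors, giving $S^{\mathrm{ss}}_{G,\beta}\subseteq\mathbb{A}^0_\beta\times_\mathbb{A} S^0_G$.

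This last decoupling is the step I expect to be the main obstacle. A single $\lambda$ acts on both factors at once, so the split numerical function does not separate the two semistability conditions automatically: one must exclude the possibility that a negative contribution from one factor is offset by a positive contribution from the other, which would create semistable points outside $S^0_{G,\beta}$ (or, dually, destroy genuine ones). This is exactly where the hypotheses are used. Normality of $\Sigma$ aligns the Cox linearization with the cone structure, while convexity of the support of $W\Sigma$ constrains the image of the monoid map $\mathbb{A}_\beta\to\mathbb{A}$ so that the toric (ray-vector) directions and the Vinberg (simple-root) directions cannot cancel against one another in $\mu^{\mathcal{L}}$. Checking that convex support genuinely forecloses such cross-cancellation --- and hence pins the semistable locus down to $S^0_{G,\beta}$ even when $\Sigma$ is not simplicial --- is the verification requiring the most care.
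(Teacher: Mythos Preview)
The paper does not give its own proof of this statement: it is quoted from \cite[Theorem~8.1]{MT1}, with only the added remark that the argument there (stated for simplicial $\Sigma$) goes through unchanged in the non-simplicial case. So there is no in-paper argument to compare against, and your proposal should be read as a reconstruction of what \cite{MT1} presumably does.

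Your reduction is correct: given the isomorphism $[S^0_{G,\beta}/\mathbb{G}_\beta]\cong\mathcal{M}_G(\Sigma)$ already recorded in the text, the content of the theorem is exactly the equality $S^{\mathrm{ss}}_{G,\beta}=S^0_{G,\beta}$ for a suitable $\mathbb{G}_\beta$-linearization, and building that linearization by tensoring a Cox-type character (encoding $P$) with the pullback of the Vinberg linearization on $S_G$ is the natural strategy. The additive Hilbert--Mumford formula you write is valid once one views $S_{G,\beta}$ as a $\mathbb{G}_\beta$-invariant closed subscheme of $\mathbb{A}_\beta\times S_G$.

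That said, what you have is an outline, and it stops precisely where the work lies. Two concrete points deserve attention. First, \cite[Theorem~5.4]{MT1} identifies $S^0_G$ as the stable locus for the $T/Z_G$-action, whereas $\mathbb{G}_\beta$ acts on $S_G$ only through the homomorphism $\mathbb{G}_\beta\to T/Z_G$ determined by the $\beta_i$; Hilbert--Mumford for $\mathbb{G}_\beta$ therefore only tests one-parameter subgroups in the image of this map, and it is not automatic that the $\mathbb{G}_\beta$-semistable locus in the $S_G$-direction coincides with $S^0_G$. One needs the $\beta_i$ to span enough of the Weyl chamber for this, and that is where the hypothesis on the support of $W\Sigma$ genuinely enters --- your text attributes it instead to ``constraining the image of $\mathbb{A}_\beta\to\mathbb{A}$'', which is not quite the mechanism. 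Second, your final paragraph names the decoupling of the two factors as the crux but does not carry it out; asserting that convexity ``forecloses cross-cancellation'' is the statement to be proved, not an argument for it. Until those two verifications are actually made, the proposal has the right architecture but not yet a proof.
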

Observe that strictly speaking in \cite{MT1} only the case of simplicial fans was discussed (and hence no properly semi-stable points were present); the proof of the above theorem, however, goes through without change in the general case.  Also note that the condition for a fan $\Sigma$ with support in the postive Weyl chamber to be normal such that $W\Sigma$ has convex support can also be stated as saying that $\Sigma$ is the normal fan to a polyhedron $P$ in the positive Weyl chamber such that $WP$ is a polytope which meets the walls of all Weyl chambers perpendicularly.  

\begin{lemma}\label{contained}
Any (stacky) cone $\sigma$ with support in the positive Weyl chamber of $G$ can be included in a normal fan $\Sigma$ such that $W\Sigma$ has convex support.
\end{lemma}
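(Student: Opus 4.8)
The plan is to reduce the statement to a standard fact about regular (coherent) subdivisions, and to get the convexity of $W\Sigma$ essentially for free from the geometry of the Coxeter fan. Throughout write $\Lambda_{\mathbb{Q}}^+$ for the closed positive Weyl chamber in $\Lambda_{\mathbb{Q}}$, and recall that its walls are its intersections with the root hyperplanes $\{\langle \alpha_i,-\rangle=0\}$, each of which is fixed pointwise by the corresponding simple reflection $s_i\in W$. I may assume $\sigma$ is a strongly convex rational polyhedral cone inside $\Lambda_{\mathbb{Q}}^+$, carrying its given ray vectors $\beta_i$.

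The heart of the argument is to build a \emph{normal} fan $\Sigma$ with $|\Sigma|=\Lambda_{\mathbb{Q}}^+$ having $\sigma$ as one of its cones, that is, a fan which is the normal fan of a (necessarily unbounded) polyhedron $P\subset V_{\mathbb{Q}}$ with recession cone $(\Lambda_{\mathbb{Q}}^+)^{\vee}$. I would produce this subdivision as a regular one. Choose a linear functional $\ell_0$ that is strictly positive on $\Lambda_{\mathbb{Q}}^+\setminus\{0\}$ (possible since the chamber is pointed) and slice by $\{\ell_0=1\}$ to obtain a bounded polytope $\Delta=\Lambda_{\mathbb{Q}}^+\cap\{\ell_0=1\}$, inside which $\sigma$ cuts out a subpolytope $\delta=\sigma\cap\{\ell_0=1\}$. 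Pick a finite rational set of rays in $\Lambda_{\mathbb{Q}}^+$ containing the rays of $\sigma$ together with the extremal rays of $\Lambda_{\mathbb{Q}}^+$, assign to the corresponding cross-section points heights equal to $0$ on the vertices of $\delta$ and generic and sufficiently large on all others, and take the lower convex hull. Since every point not coming from $\sigma$ is lifted strictly upward, the unique bottom face projecting onto $\delta$ is $\delta$ itself, so the cone over it — namely $\sigma$ — appears as a single, a priori non-simplicial, cell; the subdivision is regular and covers $\Delta$, so its cone is a normal fan $\Sigma$ with $|\Sigma|=\Lambda_{\mathbb{Q}}^+$ and $\sigma\in\Sigma$. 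Keeping $\ell_0$ and the heights rational keeps everything rational, and I retain the $\beta_i$ on the rays of $\sigma$, choose arbitrary nonzero ray vectors on the new rays, and fix any ordering extending that of $\sigma$. This regularity step — guaranteeing that $\sigma$ survives as one cell of a regular subdivision while the subdivision still fills the whole chamber, without subdividing $\sigma$ — is the only genuinely technical point and is where I expect the main work; one cannot simply invoke an arbitrary completion, since a general complete fan need not be polytopal and a polytopal refinement would break $\sigma$ apart.

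Finally I would symmetrize and verify convex support. Set $W\Sigma=\{w\tau: w\in W,\ \tau\in\Sigma\}$. The chambers $w\,\Lambda_{\mathbb{Q}}^+$ tile $\Lambda_{\mathbb{Q}}$, and any two of them meet along a common face lying in a wall, so to see that $W\Sigma$ is a genuine fan it suffices to check that the subdivisions induced on such a shared wall from the two sides coincide. But for two adjacent chambers the identifying reflection fixes their common wall pointwise, hence fixes the subdivision induced on that wall; the case of lower-dimensional shared faces follows since the pointwise stabilizer of a face is the parabolic subgroup generated by the simple reflections fixing it, and each of these again fixes the induced subdivision. Thus the traces agree and $W\Sigma$ is a complete fan, so $|W\Sigma|=\Lambda_{\mathbb{Q}}$ is in particular convex.

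This is exactly what the lemma requires: $\Sigma$ is a normal fan containing $\sigma$, and $W\Sigma$ has convex support. I would also note that this recovers the ``perpendicular to the walls'' reformulation from the preceding remark, since the walls of the Weyl chambers are unions of cones of $W\Sigma$ precisely because $\Sigma$ is supported on $\Lambda_{\mathbb{Q}}^+$ and those walls are faces of the chamber cone.
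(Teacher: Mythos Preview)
Your approach differs substantially from the paper's. The paper constructs the polyhedron $P$ directly and explicitly: take $\widetilde P=-\sigma^{\vee}+\overline{v}$ (whose normal fan is just $\sigma$ and its faces) and intersect it with a translate $Q$ of the cone cut out by the simple roots, chosen so that $P=\widetilde P\cap Q$ meets the walls of the chamber perpendicularly; the normal fan of $P$ then contains $\sigma$ and has the required convexity property for $W\Sigma$, but its support is typically much smaller than the whole chamber. You instead fill the entire chamber with a regular subdivision keeping $\sigma$ as a single cell, and then symmetrize; this is more combinatorial work but makes $W\Sigma$ complete, so convexity of its support is automatic. Your verification that the $W$-translates glue to a genuine fan (via pointwise stabilizers of faces being parabolic subgroups) is correct and is a pleasant self-contained argument.

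There is, however, a real gap. You assert that the chamber $\Lambda_{\mathbb{Q}}^+$ is pointed, and your slicing step depends on this: without a linear functional strictly positive on $\Lambda_{\mathbb{Q}}^+\setminus\{0\}$ the cross-section $\Delta$ is unbounded and the lower-convex-hull construction does not go through. But the paper works throughout with a general connected reductive $G$, and whenever $Z_G$ has positive dimension the chamber contains the cocharacter space of the center as a linear subspace, so it is \emph{not} pointed. The paper's construction is insensitive to this (it explicitly remarks that $Q$ need not be strongly convex when $G$ is not semisimple). Your argument as written therefore only establishes the lemma in the semisimple case; fixing it would require treating the central direction separately (or otherwise replacing the hyperplane slice by something that yields a bounded cross-section), and this is not addressed in the proposal.
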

\begin{proof}
First, remark that without loss of generality, we can assume that $\sigma$ has dimension equal to the rank of $G$.  Let $\widetilde{P}$ be the polyhedron (in $V_{\mathbb{R}}$) that is obtained by taking $-\sigma^{\vee}$ (where $\sigma^{\vee}$ is the dual cone to $\sigma$), and shifting it by a non-zero-vector $\overline{v}$ contained in $\sigma^{\vee}$.  The issue is that $\widetilde{P}$ may not meet the walls of the Weyl chambers perpendicularly.  In order to correct this, let $Q$ be the polyhedron given by taking the cone whose outward normal vectors are the simple positive roots of $G$ (which will not be strongly convex if $G$ is not semi-simple), and shifting it by $(1+\epsilon)\overline{v}$.  If we now take $\epsilon>0$ small enough, the polyhedron given by the intersection $P=\widetilde{P}\cap Q$ will now meet all of the walls of the Weyl chamber perpendicularly, and its polar fan (wich is independent of the choices of $\overline{v}$ and $\epsilon$ under the stated conditions) will be contained in the positive Weyl chamber in $\Lambda_{\mathbb{R}}$, and it will contain the original cone $\sigma$.
\end{proof}

\subsection{}
We can now conclude: 

\begin{theorem}
For any stacky fan $\Sigma$ as above (possibly non-simplicial), we have that $\mathcal{M}_G(\Sigma)$ has a good moduli space, which is a scheme that as a (toroidal) spherical variety is given by the uncolored fan $w\Sigma$.
\end{theorem}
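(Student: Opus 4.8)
The plan is to build the good moduli space locally, cone by cone, and then invoke the gluing result (Proposition \ref{alpgluing}). Cover $\mathcal{M}_G(\Sigma)$ by the open substacks $\mathcal{M}_G(\sigma)$ as $\sigma$ ranges over the (maximal) cones of $\Sigma$; the overlaps are $\mathcal{M}_G(\sigma)\cap\mathcal{M}_G(\tau)=\mathcal{M}_G(\sigma\cap\tau)$, indexed by the common faces. The point of Lemma \ref{contained} is precisely that, although a single cone $\sigma$ need not satisfy the hypotheses of \cite[Theorem 8.1]{MT1} on its own (its Weyl translates $W\sigma$ generally fail to have convex support), it can be embedded in a normal fan $\Sigma_\sigma$ with $W\Sigma_\sigma$ of convex support. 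So first I would, for each cone $\sigma$, apply \cite[Theorem 8.1]{MT1} to $\Sigma_\sigma$ to present $\mathcal{M}_G(\Sigma_\sigma)\cong[S_{G,\beta}^{\operatorname{ss}}/\mathbb{G}_\beta]$ as a GIT quotient with semi-projective good moduli space $Y_{\Sigma_\sigma}$.

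Next I would identify the good moduli space of each piece. The substack $\mathcal{M}_G(\sigma)\subset\mathcal{M}_G(\Sigma_\sigma)$ is the open corresponding to the cone $\sigma$, which in the Cox--Vinberg/GIT picture is the preimage under $\mathcal{M}_G(\Sigma_\sigma)\to Y_{\Sigma_\sigma}$ of the affine chart attached to $\sigma$. Preimages of opens under a good moduli space morphism are saturated, so $\mathcal{M}_G(\sigma)$ inherits a good moduli space $Y_\sigma$, namely this affine open chart of $Y_{\Sigma_\sigma}$; in particular $Y_\sigma$ is an affine scheme, independent of the auxiliary choice $\Sigma_\sigma$ by uniqueness of good moduli spaces. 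For the overlaps, $\mathcal{M}_G(\sigma\cap\tau)$ is the open substack cut out by the face $\sigma\cap\tau$ of $\sigma$, which is again a saturated preimage (faces of a cone give open immersions of affine charts $Y_{\sigma\cap\tau}\hookrightarrow Y_\sigma$). This verifies the saturation hypothesis of Proposition \ref{alpgluing}, so I would conclude that $\mathcal{M}_G(\Sigma)$ has a good moduli space $\phi:\mathcal{M}_G(\Sigma)\to Y$ restricting to $Y_\sigma$ on each chart. Since the $Y_\sigma$ are affine schemes glued along open subschemes in exactly the Luna--Vust pattern, $Y$ is a separated scheme.

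Finally I would pin down $Y$ as the spherical embedding. Each local piece $Y_\sigma$ is, by the identification already established in \cite[Theorem 8.1]{MT1} and the main theorem of \cite{MT1}, the affine toroidal embedding of $G=(G\times G)/G$ attached to the cone $w\sigma$, the factor $w$ being the longest element of $W$ that enters through the orientation of the chain from $p_+$ to $p_-$. These affine simple spherical embeddings glue, along the charts indexed by common faces, by precisely the gluing that Luna--Vust theory uses to assemble the toroidal embedding of the uncolored fan $w\Sigma$; uniqueness of the good moduli space then forces $Y$ to be this embedding.

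The hard part will be the saturation check underlying the gluing. In the simplicial setting of \cite{MT1} the quotients are geometric and one can reason orbit-by-orbit, but for non-simplicial cones properly semistable points appear and that argument is unavailable; one must instead lean on the GIT description of \cite[Theorem 8.1]{MT1} and on the fact that faces of a cone carve out saturated loci in the semistable set, which is also where the assumption of an algebraically closed field of characteristic zero (so that Alper's gluing applies) is used. By comparison the scheme-theoretic gluing and the identification with $w\Sigma$ are routine once the local affine charts and their face-inclusions are in hand.
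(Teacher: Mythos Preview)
Your overall plan is exactly the paper's: cover $\mathcal{M}_G(\Sigma)$ by the $\mathcal{M}_G(\sigma)$, use Lemma~\ref{contained} to place each cone inside a normal fan $\widetilde\Sigma$ where the GIT description of \cite[Theorem~8.1]{MT1} applies, check saturation for the pieces and their overlaps, glue via Proposition~\ref{alpgluing}, and identify the result through \cite[Proposition~10.1]{MT1}.

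The gap is in your second paragraph, where you write that $\mathcal{M}_G(\sigma)$ ``is the preimage under $\mathcal{M}_G(\Sigma_\sigma)\to Y_{\Sigma_\sigma}$ of the affine chart attached to $\sigma$'' and then deduce saturation because preimages of opens are saturated. But being such a preimage \emph{is} the saturation statement; you have asserted the conclusion, not argued for it. At this point you only know $Y_{\Sigma_\sigma}$ as a GIT quotient of $S_{G,\beta}$, not as a spherical variety with a cone-indexed atlas whose preimages are the $\mathcal{M}_G(\sigma)$ --- that identification is precisely what the theorem is meant to establish, so appealing to it here is circular. Your closing paragraph rightly flags saturation as the hard part, but the earlier paragraph treats it as already done.

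The paper fills this in by reducing to the toric base. If two $\mathbb{G}_\beta$-orbit closures meet in $S^0_{G,\beta}$, their images meet in $\mathbb{A}^0_\beta$, hence the corresponding orbits in $\mathbb{A}^0_\beta\times T$ have intersecting closures. Now the key observation: in the \emph{toric} world the affine variety for any single cone is itself a GIT quotient (a fact not available for the non-abelian pieces directly), so by \cite[Remark~6.2]{alper} each $U_{\widetilde\sigma}\times T$ is saturated in $\mathbb{A}^0_\beta\times T$. This forces both original orbits upstairs to lie in a common $U_{\widetilde\sigma}\times_{\mathbb{A}}S_G^0$, which is the saturation of $\mathcal{M}_G(\sigma)$ in $\mathcal{M}_G(\widetilde\Sigma)$. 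The same projection argument handles the overlaps $\mathcal{M}_G(\sigma\cap\sigma')\subset\mathcal{M}_G(\sigma)$. This passage through the abelian quotient $S^0_{G,\beta}\to\mathbb{A}^0_\beta$ is the idea your sketch is missing.
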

 
\begin{proof}
We need to verify that the conditions of Proposition \ref{alpgluing} hold for the substacks $\mathcal{M}_G(\sigma)$, for any (stacky) cone $\sigma$ in $\Sigma$.  So firstly, we need to establish that each $\mathcal{M}_G(\sigma)$ has a good moduli space.  In light of \cite[Remark 6.2]{alper}, this boils down to verifying that each $\mathcal{M}_G(\sigma)$ is saturated in $\mathcal{M}_G(\widetilde{\Sigma})$, for some stacky fan $\widetilde{\Sigma}$ containing $\sigma$ such that $\mathcal{M}_G(\widetilde{\Sigma})$ has a good moduli space.  By Lemma \ref{contained} we can always find such a $\widetilde{\Sigma}$, and moreover assume $\mathcal{M}_G({\widetilde{\Sigma}})$ (or rather its good moduli space) is obtained as a GIT-quotient; hence we can apply the general results of GIT there.  In particular, we know that in the good moduli space orbits need to be identified if their closures meet in the semi-stable locus, in casu $S_{G,\beta}^0$.  

We can now use the Cox-Vinberg construction to show that  $\mathcal{M}_G({\sigma})$ will be saturated in $\mathcal{M}_G(\widetilde{\Sigma})$, which means that no two orbits that do not lie in $U_{\widetilde{\sigma}}\times_{\mathbb{A}} S_G^0$ for some cone $\widetilde{\sigma}$ in $\widetilde{\Sigma}$ will have closures that intersect in $S_{G,\beta}^0$.  Indeed, again using \cite[Remark 6.2]{alper}, observing that the corresponding result holds for toric varieties (since any toric variety corresponding to a single cone can itself be obtained as a GIT-quotient -- something not true for arbitrary toroidal spherical varieties corresponding to a single cone),  and hence using the variation on the Cox construction described above in Section \ref{cox-altern} we find that if orbits in $\mathbb{A}_{\beta}^0\times T$ intersect, they have to be contained in $U_{\widetilde{\sigma}}\times T$ for some cone $\widetilde{\sigma}$ in $\widetilde{\Sigma}$.  As all orbits in $T$ itself are closed, we find moreover that the corresponding orbit closures intersect in $U_{\widetilde{\sigma}}$. 

We can now go back to $S_{G,\beta}^0$: if orbit closures intersect there, so will their images in $\mathbb{A}_{\beta}^0$, and hence also the corresponding orbits in $\mathbb{A}_{\beta}^0\times T$.  Using the toric statement, we find that the original orbits had to be contained in some $U_{\widetilde{\sigma}}\times_{\mathbb{A}} S_G^0$.  Hence $[U_{\sigma}\times_{\mathbb{A}}S_G^0 \ \big/ \ \mathbb{G}_{\beta}]$ is saturated in $\mathcal{M}_G(\widetilde{\Sigma})$, and therefore each $\mathcal{M}_G(\sigma)$ has a good moduli space. 

The fact that for any $\sigma,\sigma'$ in $\Sigma$ we have that $\mathcal{M}_{G}(\sigma\cap \sigma')$ is saturated in $\mathcal{M}_G(\sigma)$ follows from a similar reasoning using a $\widetilde{\Sigma}$ as above that contains $\sigma$ (note that it is not necessary that $\sigma'$ be contained in $\widetilde{\Sigma}$, the only fact we use is that $\sigma\cap\sigma'$ is a sub-cone of $\sigma$).  

Finally the reasoning identifying the good moduli space of $\mathcal{M}_G({\Sigma})$ as the spherical embedding corresponding to $w\Sigma$ is exactly the same as \cite[Proposition 10.1]{MT1}.
\end{proof}

\section{Non-abelian symplectic cutting}\label{non-ab cut}
\subsection{}
In this section we will change viewpoint, and think about those embeddings $\mathcal{M}_G(\Sigma)$ (for $k=\mathbb{C}$) whose good moduli space is semi-projective in terms of (real) symplectic geometry.

To this end, recall that the classical (abelian) symplectic cut procedure, introduced by Lerman \cite{lerman}, takes a Hamiltonian $T$-space  $M$(for $T$ a compact connected torus) and a polyhedron $P\subset \mathfrak{t}^*$ (where $\mathfrak{t}=\Lie(T)$), the normal vectors to all of whose facets lie in the lattice $\Hom((1),T)\subset \mathfrak{t}^*$. It returns a new Hamiltonian $T$-space $M_P$, whose image under the moment map is $\mu_T(M_P)=\mu_T(M)\cap P$, and such that the pre-images of the interior of $P$ in $M$ and $M_P$ are $T$-equivariantly symplectomorphic.  

Symplectic cutting can be understood from two points of view, either as a global quotient $$M_P=(M\times \mathbb{C}^N)/\!\!/ U(1)^N,$$ (where $/\!\!/$ denotes symplectic reduction here), or as a local surgery, by taking the pre-image of $P$ under $\mu_T$, and collapsing the circle-actions determined by the normal vectors on the pre-images of the facets of $P$.  Note that in particular the global quotient construction allows the cutting to be interpreted in K\"ahler, or even algebro-geometric terms.  On the other hand, for the local surgery picture one just needs a map to a polytope such that the projection to the normal direction for each facet is a moment-map in a neighbourhood of that facet (which is strictly less than asking for a full $T$-moment map).

\subsection{}
Symplectic cutting is an elementary but very useful operation, and it is natural to try to generalise it to non-abelian group actions.  A number of suggestions for this are in the literature (e.g. \cite{weitsman, paradan}), but in some sense the most natural one is due to Woodward \cite{woodward} and further developed by Meinrenken \cite{meinrenken}.  This operation starts from a Hamiltonian $K$-space $M$, for $K$ a compact connected Lie group.  It is useful to compose the moment map $\mu_K:M\rightarrow \mathfrak{k}^*$ with the quotient map under the co-adjoint action $\mathfrak{k}^*\rightarrow \mathfrak{t}^*_+$ to the positive Weyl chamber in the dual of the Lie algebra of a maximal torus of $K$ -- we shall denote this composition $\Phi$.  If $P$ is now a polyhedron in $\mathfrak{t}^*_+$ such that if it meets any facet of $\mathfrak{t}^*_+$ in $\Phi(M)$, it does so perpendicularly, one can again apply the recipe for the local surgery for abelian cutting (since the components of $\Phi$ given by the outward normal vectors will all be smooth on neighborhoods of the pre-images of the corresponding faces), and apply it to this map $\Phi$.  The result will be a Hamiltonian $K$-space $M_P$, with $\Phi(M_P)=\Phi(M)\cap P$.  

One difference with the abelian set up is that a global (K\"ahler) quotient counter part to this construction seems to be missing: indeed,  Woodward applies his cut to a coadjoint orbit (which in particular is K\"ahler), to obtain a symplectic space that cannot carry any compatible K\"ahler structure \cite{woodward2}.  This seems to rule out any possibility of interpreting this operation in complex algebraic geometry.

We show in \cite{MT2} however that this is not the case, as long as the polytope $P$ has the property that all of its outward normal vectors lie in the positive Weyl chamber $\mathfrak{t}_+$ (and hence $P$ determines a fan $\Sigma$ as we have used above).  In this case a global quotient construction to the operation of Woodward and Meinrenken does exist.  It begins by  taking the symplectic reduction $(M\times S_G)/\!\!/ K$, where $S_G$ is the Vinberg monoid for the complexification $G=K_{\mathbb{C}}$ (one can canonically interpret $S_G$ as a (stratified) symplectic space by choosing a faithful representation $V$ of $S_G$ to embed it into the affine space $\End(V)$ equipped with the Hermitian K\"ahler structure -- the symplectic structure this endows $S_G$ with is independent of the choice of $V$).  This spaces comes with a natural morphism to $\mathbb{A}$, and the data of the polytope now allows us to take the fibered product with $\mathbb{A}_{\beta}$ over $\mathbb{A}$.  Finally the symplectic cut of Woodward and Meinrenken can now be constructed as a global quotient as 
$$M_P\cong \left(\left(\left(M\times S_G\right)/\!\!/ K\right) \times_{\mathbb{A}}\mathbb{A}_{\beta} \right) /\!\!/_{\xi} U(1)^N$$ (see \cite[Corollary 4]{MT2}). Note that the fibered product only needs the ray vectors of $\Sigma$, but the level at which the final symplectic reduction is performed uses all the data of $P$.  If $M$ was a complex semi-projective variety to begin with, and $P$ had all outward normal vectors in the positive Weyl chamber, this construction can therefore be understood to give a new semi-projective variety.

\subsection{}
 For details of the proof of this we refer to \cite{MT2}, but we will briefly mention the key ideas behind this: firstly, it is well-known that any suitably intrinsic operation $\mathfrak{X}$ one wants to apply to a Hamiltonian $K$-space $M$ can equivalently be done by first applying it to $T^*K$, and then taking the quotient of the carthesian product of the resulting space and $M$ -- symbolically we could write this as $$\mathfrak{X}(M)\cong \left(M \times \mathfrak{X}\left(T^*K\right)\right) /\!\!/ K.$$  This is the case for symplectic implosion \cite{gjs} where the implosion of $T^*K$ is dubbed the \emph{universal implosion} (and trivially for $\mathfrak{X}=\id$ or symplectic reduction by $K$), but also for symplectic cutting.  Indeed, in the abelian symplectic cut one has that the \emph{universal symplectic cut} of $T^*T$ is just the symplectic toric space corresponding to the polyhedron $P$.
  
 In fact, this latter point of view is just the symplectic counter part of the alternative approach to the Cox construction that was outlined above.  The standard symplectic counterpart to the Cox construction is known as the Delzant construction \cite{delzant}, which essentially says that any compact toric manifold (or one whose normal fan satisfies the condition that (\ref{SES}) is exact) can be obtained as the symplectic reduction of $\mathbb{C}^N$, where $N$ is the number of facets of $P$.  This again suffers from the same disadvantages as the standard Cox construction when one wants to consider more general toric symplectic spaces (with proper moment maps).  
 
 Whereas in the algebro-geometric picture this was remedied by looking at \emph{equivariant} Cox rings, in the symplectic picture one can just replace the symplectic reduction of $\mathbb{C}^N$ by the symplectic cut of $T^*T$ with respect to $P$ (one can think of this as the \emph{universal cut}).  Moreover, we find that by doing so we not only generalize the validity of the Delzant construction, but also allow for a non-abelian generalisation: indeed, it is shown in \cite[Theorem 3]{MT2} that the universal non-abelian cuts $T^*K_P$ correspond exactly to the spaces $\mathcal{M}_G(\Sigma)$ discussed above, and one can then use the Cox-Vinberg construction for the latter.
 
 The second idea, crucial in establishing this correspondence between $T^*K_P$ and $\mathcal{M}_G(\Sigma)$, is to use a \emph{section of the moment map} for $S_G$.  The use of such a section of the moment map was already implicit in \cite{delzant} (present whenever a square root showed up in the defining formulas), and can still be done in the non-abelian setting, by using the polar decomposition of complex matrices in combination with an (arbitrary) faithful representation of $S_G$.

\section{Open questions}
Finally, we will list here a number of open questions regarding the compactifications discussed above.
\subsection{Ordering of weights and shuffling functors}\label{orderings} As discussed above, the stability condition requires an \emph{ordering} of the ray vectors $\beta_i$.  Different orderings will cut out different substacks $\mathcal{M}_G(\Sigma)$, but it turns out these are all isomorphic, as can easily be seen from the Cox-Vinberg construction.  This implies that there is a sort of \emph{shuffling functor} that associates with every family of bundle chains that are stable for one choice of ordering, a new family (all of whose chains have the same length as before) that is stable with respect to another choice of ordering, such that the ray vectors in the splitting type of a bundle chain over a closed point just get shuffled around.

Other circumstantial evidence comes from thinking about affine toric varieties: all of these have modular interpretations by the description above, but at the same time we also know that all of affine toric varieties are canonically abelian monoids, extending the group operation of the torus.  It is therefore natural to ask how this monoidal structure can be understood from the modular point of view.  The most obvious candidate is given by gluing two chains together, attaching $p_+$ from the first to $p_-$ of the second.  If one has a shuffling functor available, one can then shuffle the chain to have the ray vectors in the correct order, and contract any component of the chain whose two associated ray vectors are identical.  This hypothetical shuffling operation should correspond to the monoidal operation on the affine toric variety.

In spite of this evidence we do not know how to describe such a shuffling functor directly however.

\subsection{Modular interpretations of the Vinberg monoid}\label{modularvinberg}  Related to the gluing of chains is the question wether the Vinberg monoid itself represents a moduli problem.  As an embedding of $G_{\enh}$, $S_G$ is not toroidal, so doesn't fall under the framework discussed above.  The stack of all framed bundle chains has a monoidal structure by gluing chains together, but the condition that all elements of the splitting type lie in the same Weyl chamber typically gets lost while doing this, so even the construction of $\mathcal{M}$ in \cite{MT1} needs to be expanded to accommodate this.

\subsection{Stratifications of the stack $\mathcal{M}$}
The stability condition described above is introduced in a seemingly ad-hoc way, motivated by the Birkhoff-Grothendieck-Harder theorem and its generalization, and the use of fans in toric and spherical geometry.  Recently, however, more systematic studies have been made of notions of (in-)stability on stacks, by Heinloth \cite{heinl} and Halpern-Leistner \cite{HL}.   One can therefore ask if it is possible to recover the notions of stability we have introduced from a general $\Theta$-stratification following Halpern-Leistner.  Note that since the method above also gives rise to non-projective complete good moduli spaces, at least some of the methods for constructing stability functions in \cite{HL} would have to be extended.

\subsection{Beyond toroidal}
The work described above is restricted to moduli stacks whose good moduli spaces are \emph{toroidal} embeddings of $G$.  The theory of spherical varieties also gives many other embeddings, classified by colored cones.  Can the modular interpretation be extended to these as well?

\subsection{Gauge-theoretic description}
In Section \ref{non-ab cut} above a symplectic viewpoint on the compactifications of $G$ was given in terms of non-abelian cutting.  This does not directly come with a modular interpretation however.  

It is a seminal result in the theory of moduli spaces of bundles on projective curves that they can also be interpreted (as stratified real smooth spaces) as moduli spaces of flat connections, for the compact form of the structure group, on the underlying surface.  This is a special case of the Hitchin-Kobayashi correspondence, in this setting due to Donaldson \cite{donaldson}, but essentially originating with the work of Narasimhan-Seshadri \cite{narases}.  Given that the spaces we consider can be interpreted as moduli spaces of $G$-bundles in algebraic geometry, it is natural to ask if a similar interpretation exists, in gauge-theoretic, or at least differential-geometric terms?  

An intruiging hint comes from the work of Mundet i Riera and Tian \cite{mundet-tian}, who give a compactification of the moduli space of so-called \emph{twisted holomorphic maps} -- equivariant maps of a $U(1)$-principal bundle over a Riemann surface into a symplectic manifold with Hamiltonian $U(1)$ action.  In their compactification they see that, when a Riemann surface degenerates to one with nodal singularities, they have to allow the images of the various components to be disjoint of each other, but connected by paths of broken flow lines for the $U(1)$ moment map.  The moduli problem we discuss above can in some sense be interpreted as being given by twisted holomorphic maps of $\mathbb{P}^1$ with two marked points into $BG$, and it is tantalizing to try to interpret the chains of projective lines similarly as ($U(1)$-orbits of) broken flow lines.

\subsection{Compactification of universal moduli space}
As we indicated above, a historical motivation for studying group compactifications is the link with a compactification of the  \emph{universal} moduli space of bundles on curves (where both are allowed to vary), lying over the Deligne-Mumford compactification of the moduli space of curves.  This has been an open problem for several decades now; in the case of vector bundles various constructions are available in the literature (e.g. \cite{pand,schmitt, nagses, gies}), and in particular for the case of line bundles the literature is vast, but for general principal bundles much less is known -- for recent approaches to this based on compactifying loop groups, see the work of Solis \cite{solis1,solis2}.

 Note that compactifying the group is not sufficient to obtain a compactification of these moduli spaces, but any such compactification does give rise to a compactification of the group, by taking a fixed stable bundle on a nodal curve whose normalisation is connected, and creating a family of bundles on the nodal curve out of this parametrised by the group, by changing the gluing at the node.  One can therefore ask if a compacticiation of the universal moduli space of bundles on curves exists, such that the corresponding compactifications of the group (with attached modular interpretation) are those described above.

\subsection{Link with work of Kausz} 
As mentioned in the introduction, Kausz introduced a particular equivariant compactification of $GL(n)$, and gave it a modular interpretation (in terms of ``generalized isomorphisms" and ``back-and-forth morphisms", see \cite[\S5]{kausz}), motivated by the study of degenerations of moduli spaces of vector bundles on smooth curves.  As a variety it can be understood by embedding $GL(n)$ into $PGL(n+1)$, and then taking the closure in the wonderful compactication of the latter.  As a spherical variety this is toroidal, and hence the variety can also be understood as a moduli space of framed bundle chains (see \cite[\S12]{MT1}).  

It would be interesting to see if the corresponding morphism can be understood in terms of the two moduli problems.   In other work of Kausz some ideas similar to those described above are present (e.g. moduli spaces of stable maps into $BGL(r)$ in \cite{kausz2}, or in particular chains of projective lines, dubbed \emph{zollst\"ocke}, in \cite{kausz3}).  Neverthless we have not been able to find a link between the two modular interpretations of the compactifcation of $GL(r)$.


\begin{thebibliography}{MiRT09}

\bibitem[ACV03]{ACV}
Dan Abramovich, Alessio Corti, and Angelo Vistoli.
\newblock Twisted bundles and admissible covers.
\newblock {\em Comm. Algebra}, 31(8):3547--3618, 2003.
\newblock Special issue in honor of Steven L. Kleiman.
\newblock \href {http://dx.doi.org/10.1081/AGB-120022434}
  {\path{doi:10.1081/AGB-120022434}}.

\bibitem[Alp13]{alper}
Jarod Alper.
\newblock Good moduli spaces for {A}rtin stacks.
\newblock {\em Ann. Inst. Fourier (Grenoble)}, 63(6):2349--2402, 2013.
\newblock \href {http://dx.doi.org/10.5802/aif.2833}
  {\path{doi:10.5802/aif.2833}}.

\bibitem[Alp14]{alper2}
Jarod Alper.
\newblock Adequate moduli spaces and geometrically reductive group schemes.
\newblock {\em Algebr. Geom.}, 1(4):489--531, 2014.
\newblock \href {http://dx.doi.org/10.14231/AG-2014-022}
  {\path{doi:10.14231/AG-2014-022}}.

\bibitem[BCS05]{BCS}
Lev~A. Borisov, Linda Chen, and Gregory~G. Smith.
\newblock The orbifold {C}how ring of toric {D}eligne-{M}umford stacks.
\newblock {\em J. Amer. Math. Soc.}, 18(1):193--215 (electronic), 2005.
\newblock \href {http://dx.doi.org/10.1090/S0894-0347-04-00471-0}
  {\path{doi:10.1090/S0894-0347-04-00471-0}}.

\bibitem[DCP83]{DCP1}
C.~De~Concini and C.~Procesi.
\newblock Complete symmetric varieties.
\newblock In {\em Invariant theory ({M}ontecatini, 1982)}, volume 996 of {\em
  Lecture Notes in Math.}, pages 1--44. Springer, Berlin, 1983.
\newblock \href {http://dx.doi.org/10.1007/BFb0063234}
  {\path{doi:10.1007/BFb0063234}}.

\bibitem[DCP85]{DCP2}
C.~De~Concini and C.~Procesi.
\newblock Complete symmetric varieties. {II}. {I}ntersection theory.
\newblock In {\em Algebraic groups and related topics ({K}yoto/{N}agoya,
  1983)}, volume~6 of {\em Adv. Stud. Pure Math.}, pages 481--513.
  North-Holland, Amsterdam, 1985.

\bibitem[Del88]{delzant}
Thomas Delzant.
\newblock Hamiltoniens p\'eriodiques et images convexes de l'application
  moment.
\newblock {\em Bull. Soc. Math. France}, 116(3):315--339, 1988.

\bibitem[Don83]{donaldson}
S.~K. Donaldson.
\newblock A new proof of a theorem of {N}arasimhan and {S}eshadri.
\newblock {\em J. Differential Geom.}, 18(2):269--277, 1983.
\newblock URL: \url{http://projecteuclid.org/euclid.jdg/1214437664}.

\bibitem[FMN10]{FMN}
Barbara Fantechi, Etienne Mann, and Fabio Nironi.
\newblock Smooth toric {D}eligne-{M}umford stacks.
\newblock {\em J. Reine Angew. Math.}, 648:201--244, 2010.
\newblock \href {http://dx.doi.org/10.1515/CRELLE.2010.084}
  {\path{doi:10.1515/CRELLE.2010.084}}.

\bibitem[Gie84]{gies}
D.~Gieseker.
\newblock A degeneration of the moduli space of stable bundles.
\newblock {\em J. Differential Geom.}, 19(1):173--206, 1984.
\newblock URL: \url{http://projecteuclid.org/euclid.jdg/1214438427}.

\bibitem[GJS02]{gjs}
Victor Guillemin, Lisa Jeffrey, and Reyer Sjamaar.
\newblock Symplectic implosion.
\newblock {\em Transform. Groups}, 7(2):155--184, 2002.
\newblock \href {http://dx.doi.org/10.1007/s00031-002-0009-y}
  {\path{doi:10.1007/s00031-002-0009-y}}.

\bibitem[Gro57]{groth}
A.~Grothendieck.
\newblock Sur la classification des fibr\'es holomorphes sur la sph\`ere de
  {R}iemann.
\newblock {\em Amer. J. Math.}, 79:121--138, 1957.
\newblock \href {http://dx.doi.org/10.2307/2372388}
  {\path{doi:10.2307/2372388}}.

\bibitem[Har68]{harder}
G{\"u}nter Harder.
\newblock Halbeinfache {G}ruppenschemata \"uber vollst\"andigen {K}urven.
\newblock {\em Invent. Math.}, 6:107--149, 1968.
\newblock \href {http://dx.doi.org/10.1007/BF01425451}
  {\path{doi:10.1007/BF01425451}}.

\bibitem[Hei16]{heinl}
Jochen Heinloth.
\newblock Hilbert-{M}umford stability on algebraic stacks and applications to
  $\mathcal{G}$-bundles on curves, 2016.
\newblock \href {http://arxiv.org/abs/1609.06058} {\path{arXiv:1609.06058}}.

\bibitem[HL14]{HL}
Daniel Halpern-Leistner.
\newblock On the structure of instability in moduli theory, 2014.
\newblock \href {http://arxiv.org/abs/1411.0627} {\path{arXiv:1411.0627}}.

\bibitem[Hur11]{huruguen}
Mathieu Huruguen.
\newblock Toric varieties and spherical embeddings over an arbitrary field.
\newblock {\em J. Algebra}, 342:212--234, 2011.
\newblock \href {http://dx.doi.org/10.1016/j.jalgebra.2011.05.031}
  {\path{doi:10.1016/j.jalgebra.2011.05.031}}.

\bibitem[Kau00]{kausz}
Ivan Kausz.
\newblock A modular compactification of the general linear group.
\newblock {\em Doc. Math.}, 5:553--594 (electronic), 2000.

\bibitem[Kau08]{kausz3}
Ivan Kausz.
\newblock A cyclic operad in the category of artin stacks and gravitational
  correlators, 2008.
\newblock \href {http://arxiv.org/abs/0802.3675} {\path{arXiv:0802.3675}}.

\bibitem[Kau10]{kausz2}
Ivan Kausz.
\newblock Stable maps into the classifying space of the general linear group.
\newblock In {\em Teichm\"uller theory and moduli problem}, volume~10 of {\em
  Ramanujan Math. Soc. Lect. Notes Ser.}, pages 437--449. Ramanujan Math. Soc.,
  Mysore, 2010.

\bibitem[Ler95]{lerman}
Eugene Lerman.
\newblock Symplectic cuts.
\newblock {\em Math. Res. Lett.}, 2(3):247--258, 1995.
\newblock \href {http://dx.doi.org/10.4310/MRL.1995.v2.n3.a2}
  {\path{doi:10.4310/MRL.1995.v2.n3.a2}}.

\bibitem[LV83]{luna-vust}
D.~Luna and Th. Vust.
\newblock Plongements d'espaces homog\`enes.
\newblock {\em Comment. Math. Helv.}, 58(2):186--245, 1983.
\newblock \href {http://dx.doi.org/10.1007/BF02564633}
  {\path{doi:10.1007/BF02564633}}.

\bibitem[Mei98]{meinrenken}
Eckhard Meinrenken.
\newblock Symplectic surgery and the {${\rm Spin}^c$}-{D}irac operator.
\newblock {\em Adv. Math.}, 134(2):240--277, 1998.
\newblock \href {http://dx.doi.org/10.1006/aima.1997.1701}
  {\path{doi:10.1006/aima.1997.1701}}.

\bibitem[MiRT09]{mundet-tian}
I.~Mundet~i Riera and G.~Tian.
\newblock A compactification of the moduli space of twisted holomorphic maps.
\newblock {\em Adv. Math.}, 222(4):1117--1196, 2009.
\newblock \href {http://dx.doi.org/10.1016/j.aim.2009.05.019}
  {\path{doi:10.1016/j.aim.2009.05.019}}.

\bibitem[MT12a]{MT2}
Johan Martens and Michael Thaddeus.
\newblock On non-{A}belian symplectic cutting.
\newblock {\em Transform. Groups}, 17(4):1059--1084, 2012.
\newblock \href {http://dx.doi.org/10.1007/s00031-012-9202-9}
  {\path{doi:10.1007/s00031-012-9202-9}}.

\bibitem[MT12b]{MT3}
Johan Martens and Michael Thaddeus.
\newblock Variations on a theme of grothendieck.
\newblock 2012.
\newblock \href {http://arxiv.org/abs/1210.8161} {\path{arXiv:1210.8161}}.

\bibitem[MT16]{MT1}
Johan Martens and Michael Thaddeus.
\newblock Compactifications of reductive groups as moduli stacks of bundles.
\newblock {\em Compos. Math.}, 152(1):62--98, 2016.
\newblock \href {http://dx.doi.org/10.1112/S0010437X15007484}
  {\path{doi:10.1112/S0010437X15007484}}.

\bibitem[NS64]{narases}
M.~S. Narasimhan and C.~S. Seshadri.
\newblock Holomorphic vector bundles on a compact {R}iemann surface.
\newblock {\em Math. Ann.}, 155:69--80, 1964.
\newblock \href {http://dx.doi.org/10.1007/BF01350891}
  {\path{doi:10.1007/BF01350891}}.

\bibitem[NS99]{nagses}
D.~S. Nagaraj and C.~S. Seshadri.
\newblock Degenerations of the moduli spaces of vector bundles on curves. {II}.
  {G}eneralized {G}ieseker moduli spaces.
\newblock {\em Proc. Indian Acad. Sci. Math. Sci.}, 109(2):165--201, 1999.
\newblock \href {http://dx.doi.org/10.1007/BF02841533}
  {\path{doi:10.1007/BF02841533}}.

\bibitem[Pan96]{pand}
Rahul Pandharipande.
\newblock A compactification over {$\overline {M}_g$} of the universal moduli
  space of slope-semistable vector bundles.
\newblock {\em J. Amer. Math. Soc.}, 9(2):425--471, 1996.
\newblock \href {http://dx.doi.org/10.1090/S0894-0347-96-00173-7}
  {\path{doi:10.1090/S0894-0347-96-00173-7}}.

\bibitem[Par09]{paradan}
Paul-{\'E}mile Paradan.
\newblock Formal geometric quantization.
\newblock {\em Ann. Inst. Fourier (Grenoble)}, 59(1):199--238, 2009.
\newblock \href {http://dx.doi.org/10.5802/aif.2429}
  {\path{doi:10.5802/aif.2429}}.

\bibitem[Rit01]{rittatore}
Alvaro Rittatore.
\newblock Very flat reductive monoids.
\newblock {\em Publ. Mat. Urug.}, 9:93--121 (2002), 2001.

\bibitem[Sch04]{schmitt}
Alexander Schmitt.
\newblock The {H}ilbert compactification of the universal moduli space of
  semistable vector bundles over smooth curves.
\newblock {\em J. Differential Geom.}, 66(2):169--209, 2004.
\newblock URL: \url{http://projecteuclid.org/euclid.jdg/1102538609}.

\bibitem[Sem51]{semple}
John~G. Semple.
\newblock The variety whose points represent complete collineations of {$S_r$}
  on {$S'_r$}.
\newblock {\em Univ. Roma. Ist. Naz. Alta Mat. Rend. Mat. e Appl. (5)},
  10:201--208, 1951.

\bibitem[Sol12]{solis1}
Pablo Solis.
\newblock A wonderful embedding of the loop group, 2012.
\newblock \href {http://arxiv.org/abs/1208.1590} {\path{arXiv:1208.1590}}.

\bibitem[Sol13]{solis2}
Pablo Solis.
\newblock A complete degeneration of the moduli of $G$-bundles on a curve,
  2013.
\newblock \href {http://arxiv.org/abs/1311.6847} {\path{arXiv:1311.6847}}.

\bibitem[Tim03]{timashev}
D.~A. Timash{\"e}v.
\newblock Equivariant compactifications of reductive groups.
\newblock {\em Mat. Sb.}, 194(4):119--146, 2003.
\newblock \href {http://dx.doi.org/10.1070/SM2003v194n04ABEH000731}
  {\path{doi:10.1070/SM2003v194n04ABEH000731}}.

\bibitem[Vin95]{vinberg}
E.~B. Vinberg.
\newblock On reductive algebraic semigroups.
\newblock In {\em Lie groups and {L}ie algebras: {E}. {B}. {D}ynkin's
  {S}eminar}, volume 169 of {\em Amer. Math. Soc. Transl. Ser. 2}, pages
  145--182. Amer. Math. Soc., Providence, RI, 1995.

\bibitem[Wei01]{weitsman}
Jonathan Weitsman.
\newblock Non-abelian symplectic cuts and the geometric quantization of
  noncompact manifolds.
\newblock {\em Lett. Math. Phys.}, 56(1):31--40, 2001.
\newblock EuroConf{\'e}rence Mosh{\'e} Flato 2000, Part I (Dijon).
\newblock \href {http://dx.doi.org/10.1023/A:1010907708197}
  {\path{doi:10.1023/A:1010907708197}}.

\bibitem[Woo96]{woodward}
Chris Woodward.
\newblock The classification of transversal multiplicity-free group actions.
\newblock {\em Ann. Global Anal. Geom.}, 14(1):3--42, 1996.
\newblock \href {http://dx.doi.org/10.1007/BF00128193}
  {\path{doi:10.1007/BF00128193}}.

\bibitem[Woo98]{woodward2}
Chris Woodward.
\newblock Multiplicity-free {H}amiltonian actions need not be {K}\"ahler.
\newblock {\em Invent. Math.}, 131(2):311--319, 1998.
\newblock \href {http://dx.doi.org/10.1007/s002220050206}
  {\path{doi:10.1007/s002220050206}}.

\end{thebibliography}
\end{document}